\newtheorem{theorem}{Theorem}[section]
\theoremstyle{rem}
\newtheorem{rem}[theorem]{Remark}
\newtheorem{lemma}[theorem]{Lemma}
\theoremstyle{definition}
\numberwithin{equation}{section}
\begin{document}

\author{E. Liflyand and U. Stadtm\"uller}
\dedicatory{Bar-Ilan University, Israel, and University of Ulm,
Germany}
\title{On a Hardy-Littlewood theorem}

\subjclass[2010]{Primary 42A50; Secondary 42A20, 42A38, 26B30}
\keywords{Fourier integral, Hilbert transform, bounded variation,
Lebesgue point}
\address{Department of Mathematics, Bar-Ilan University, 52900 Ramat-Gan, Israel}
\email{liflyand@math.biu.ac.il}
\address{Department of Mathematics, University of Ulm, 89069 Ulm, Germany}
\email{ulrich.stadtmueller@uni-ulm.de}

\begin{abstract}
A known Hardy-Littlewood theorem asserts that if both the function
and its conjugate are of bounded variation, then their Fourier
series are absolutely convergent. It is proved in the paper that the
same result holds true for functions on the whole axis and their
Fourier transforms with certain adjustments. The proof of the
original Hardy-Littlewood theorem is derived from the obtained
assertion. It turned out that the former is a partial case of the
latter when the function is supposed to be of compact support. A
similar result as the obtained one but for radial functions is
derived from the one-dimensional case.
\end{abstract}

\maketitle

\section{Introduction}

The following result is due to Hardy and Littlewood (see, e.g., \cite[Vol.I, Ch.VII,
(8.6)]{Zg}).

\begin{theorem}\label{fcbv} If a (periodic) function $f$ and its
conjugate $\widetilde f$ are both of bounded variation, their
Fourier series converge absolutely. \end{theorem}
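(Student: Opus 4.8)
The plan is to reduce Theorem~\ref{fcbv} to the statement that an ``analytic'' function of bounded variation on the circle $\mathbb{T}$ has absolutely convergent Fourier series, and then to establish that statement by combining the F.~and~M.~Riesz theorem with Hardy's inequality. First I would normalise: splitting $f$ into real and imaginary parts (both again of bounded variation, and conjugation acts componentwise, so $\operatorname{Re}\widetilde f=\widetilde{\operatorname{Re}f}$ and likewise for the imaginary part), one reduces to a real-valued $f$. Writing $f\sim\sum_{n}c_ne^{inx}$, the conjugate has $\widetilde f\sim\sum_{n}(-i\operatorname{sgn}n)\,c_ne^{inx}$, so the function $g:=f+i\widetilde f$ has Fourier coefficients $\widehat g(n)=c_n(1+\operatorname{sgn}n)$, which vanish for $n<0$; moreover $g$ is of bounded variation as a sum of two such functions. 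Since the moduli of the Fourier coefficients of $\widetilde f$ coincide with those of $f$, and $c_{-n}=\overline{c_n}$ (as $f$ is real), it suffices to prove $\sum_{n\ge 0}|\widehat g(n)|<\infty$.

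Next I would exploit the one-sided spectrum of $g$. Since $g\in BV(\mathbb{T})$, its distributional derivative is a finite complex Borel measure $\mu$ with $\widehat\mu(n)=in\,\widehat g(n)$, hence $\widehat\mu(n)=0$ for every $n\le 0$. By the F.~and~M.~Riesz theorem such a measure is absolutely continuous with respect to Lebesgue measure, so in fact $g$ is absolutely continuous, $g'\in L^1(\mathbb{T})$, and $g'$ has non-negative spectrum, i.e.\ $g'\in H^1(\mathbb{T})$. Applying Hardy's inequality for $H^1$,
\[
\sum_{n\ge 1}\frac{|\widehat h(n)|}{n}\le \pi\,\|h\|_{H^1},\qquad h\in H^1(\mathbb{T}),
\]
to $h=g'$ (where $\widehat h(n)=in\,\widehat g(n)$) gives $\sum_{n\ge 1}|\widehat g(n)|\le\pi\|g'\|_{1}<\infty$; together with the constant term this yields $\sum_{n\ge 0}|\widehat g(n)|<\infty$. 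Unwinding the normalisation ($\widehat g(n)=2c_n$ for $n\ge1$, $c_{-n}=\overline{c_n}$, and $|\widehat{\widetilde f}(n)|=|c_n|$) then gives absolute convergence of the Fourier series of both $f$ and $\widetilde f$.

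The main obstacle is the middle step. The passage from ``$g$ of bounded variation'' to ``$g$ absolutely continuous with $g'\in L^1$'' genuinely requires the F.~and~M.~Riesz theorem — for a general analytic $BV$ function on $\mathbb{T}$ one cannot expect more than an $L^1$ derivative — and the final summability is precisely Hardy's inequality, which is not elementary. A self-contained treatment would have to reprove the latter, for instance through the canonical factorization of an $H^1$ function, or via $H^1$--$\mathrm{BMO}$ duality applied to the multiplier $\sum_{n\ge1}\lambda_n e^{inx}/n$ with $|\lambda_n|\le 1$. Everything else — the reduction to real $f$, the identification of the spectra of $\widetilde f$ and $g$, and the bookkeeping with Fourier coefficients — is routine. (One could also envisage deriving the periodic statement from a real-line Hardy--Littlewood theorem by a periodization argument, presumably the route of the present paper, but the argument above is the one I would carry out directly.)
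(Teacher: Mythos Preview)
Your argument is correct and is essentially the classical proof from Zygmund: pass to the analytic completion $g=f+i\widetilde f$, use the F.~and~M.~Riesz theorem to upgrade bounded variation of $g$ to absolute continuity, and then apply Hardy's inequality on $H^1(\mathbb{T})$ to $g'$. The paper takes a deliberately different route, the one you anticipate in your closing parenthetical: it restricts $f$ to a single period to obtain a compactly supported $F$ on $\mathbb{R}$, checks (via the bounded difference between $\tfrac12\cot\tfrac{t}{2}$ and $1/t$) that the modified Hilbert transform of $F$ inherits bounded variation from $\widetilde f$, applies its real-line Theorem~\ref{harlit} to get $\widehat F\in L^1(\mathbb{R})$, and then invokes a result of Wiener to pass from integrability of $\widehat F$ to absolute convergence of the Fourier series of the periodisation. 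Both approaches rest on Hardy's inequality---yours on the circle, the paper's on the line---and both need the absolute-continuity step (you via F.~and~M.~Riesz directly, the paper via the Zygmund reference \cite[Ch.~VII, (8.2)]{Zg} inside the proof of Theorem~\ref{harlit}). Your route is shorter and self-contained on $\mathbb{T}$; the paper's route is longer but serves its thematic purpose, namely exhibiting Theorem~\ref{fcbv} as the compact-support special case of the non-periodic Theorem~\ref{harlit}.
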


In \cite{Zg} this result is one of the consequences of the general
theory of Hardy spaces, first of all $H^1$ in the unit disk. Since
we are going to generalize the Hardy-Littlewood theorem to functions on the
real axis, let us recall certain notions. The Fourier transform $\widehat f$
of a (complex-valued) function $g$ in $L^1(\mathbb R)$ is defined by

\begin{eqnarray*}\widehat g(t):=\int_{\mathbb R}g(x)e^{-itx}dx,\quad t\in\mathbb R,\end{eqnarray*}
while its Hilbert transform $\tilde g$ is defined by

\begin{align*}\mathcal{H}g(x)&:=\frac{1}{\pi}\,\mbox{\rm (P.V.)}
\int_{\mathbb R} g(x-u)\frac{du}{u}=\frac{1}{\pi}\,\mbox{\rm (P.V.)}
\int_{\mathbb R} \frac{g(u)}{x-u}\,du\\
&=\frac{1}{\pi}\lim\limits_{\delta\downarrow 0}\int_\delta^\infty
\{g(x-u)-g(x+u)\}\frac{du}{u},\quad  x\in\mathbb R.    \end{align*}
As is well known, for $g\in L^1(\mathbb R)$ this limit exists for
almost all $x$ in $\mathbb R,$ and the real Hardy space $H^1(\mathbb
R)$ is defined to be

\begin{eqnarray*}H^1(\mathbb  R):=\{g\in L^1(\mathbb  R):\mathcal{H}g\in L^1(\mathbb  R)\},\end{eqnarray*}
where $L^1(\mathbb  R)$ is the usual space of integrable functions with norm

\begin{eqnarray*}\|g\|_{L^1}:=\int_{\mathbb R}|g(x)|\,dx.\end{eqnarray*}
The Hardy space is endowed with the norm

\begin{eqnarray}\label{nh}\|g\|_{H^1}:=\|g\|_{L^1}+\|\mathcal{H} g\|_{L^1}.\end{eqnarray}
If $g\in H^1(\mathbb R)$, then

\begin{eqnarray}\label{vm}\int_{\mathbb R}  g(t)\,dt=0.\end{eqnarray}
It was apparently first mentioned in \cite{kober}.

Correspondingly, the absolute convergence of the Fourier series
should be replaced by the integrability of the Fourier transform.
Since a function $f$ of bounded variation may be not integrable, its
Hilbert transform, a usual substitute for the conjugate function,
may not exist. One has to use the modified Hilbert transform (see,
e.g., \cite{Garn})

$$\widetilde{f}(x)= \,\mbox{\rm (P.V.)}\,\frac{1}{\pi}\int_\mathbb{R}f(t)\biggl\{\frac{1}{x-t}+\frac{t}{1+t^2}\biggr\}\, dt$$
well adjusted for bounded functions. As a singular integral, it
behaves like the usual Hilbert transform; the additional term in the
integral makes it to be well defined near infinity.

Our work is much in the spirit of the book \cite{BN}, especially
Chapter 8. Roughly speaking, some classes are characterized there
for the values $r=1,2,...$ of a certain parameter $r$. Our
consideration formally corresponds to the case $r=0.$

The outline of the paper is as follows. In the next section we
formulate and prove the main result. As in the proof of the initial
result in \cite[Ch.VII, \S 8]{Zg} much is based on the Hardy
inequality (cf. (8.7) in the cited chapter and (\ref{Fein}) in the
present text). Then we derive the original Hardy-Littlewood theorem
from the proven result. In the last section we apply the obtained
theorem to deriving a similar multidimensional result for radial
functions.

\section{Main result}

\begin{theorem}\label{harlit} Let $f$ be a function of bounded variation
and vanish at infinity: $\lim\limits_{|t|\to\infty}f(t)=0.$ If its
conjugate $\widetilde f$ is also of bounded variation, then the
Fourier transforms of both functions are integrable on $\mathbb R.$
\end{theorem}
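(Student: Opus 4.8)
The strategy is to reduce the statement to a Hardy-type inequality on the Fourier side, following the outline indicated after the statement. First I would record the basic structural facts about functions of bounded variation that vanish at infinity: such an $f$ is bounded, its total variation $\|f\|_{BV}$ is finite, and $f$ admits a representation $f(t)=-\int_{(t,\infty)}df$, so that the Fourier transform $\widehat f(x)$ exists in the improper (principal-value) sense and satisfies the classical estimate $|x\widehat f(x)|\le C\|f\|_{BV}$ obtained by an integration by parts,
\begin{eqnarray*}
\widehat f(x)=\frac{1}{ix}\int_{\mathbb R}e^{-itx}\,df(t),
\end{eqnarray*}
valid for $x\neq 0$. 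The same estimate applies to $\widetilde f$ since by hypothesis it too is of bounded variation (and it vanishes at infinity because of the structure of the modified Hilbert transform together with $\int\,df=0$ in the relevant cases). Thus both $\widehat f$ and $\widehat{\widetilde f}$ are $O(1/|x|)$ at infinity and, being also bounded near the origin, the only obstruction to integrability is the decay rate at infinity: $1/|x|$ is just barely non-integrable, so one needs to exploit the interplay between $f$ and $\widetilde f$ to gain the missing logarithmic factor — this is exactly the phenomenon in the periodic Hardy–Littlewood theorem.

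The key step is the relation between the Fourier transforms of a function and its (modified) Hilbert transform: up to the harmless correction term, $\widehat{\widetilde f}(x)=-i\,\mathrm{sgn}(x)\,\widehat f(x)$, so that
\begin{eqnarray*}
\widehat{(f+i\widetilde f)}(x)=2\widehat f(x)\quad\text{for }x>0,\qquad \widehat{(f+i\widetilde f)}(x)=0\quad\text{for }x<0
\end{eqnarray*}
(again modulo the explicitly computable contribution of the $t/(1+t^2)$ term, which is smooth and integrable in $x$). Hence $g:=f+i\widetilde f$ is, after subtracting an elementary correction, a function whose Fourier transform is supported on the half-line — a Hardy space situation. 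I would then invoke the Hardy inequality on $H^1(\mathbb R)$ (the analogue of (8.7) in \cite[Ch.VII]{Zg}, referred to as (\ref{Fein}) in the paper),
\begin{eqnarray*}
\int_{\mathbb R}\frac{|\widehat g(x)|}{|x|}\,dx\le C\|g\|_{H^1},
\end{eqnarray*}
which controls $\int_{|x|\ge 1}|\widehat f(x)|\,dx$ once we know $g\in H^1$. To see that $g\in H^1$: $g=f+i\widetilde f\in L^1$ is \emph{not} automatic (neither $f$ nor $\widetilde f$ need be integrable), so the actual claim to establish is that $f$ of bounded variation with $\widetilde f$ of bounded variation forces the combination $f+i\widetilde f$ (minus the correction) to be integrable, which in turn follows from the half-line support of its Fourier transform plus the $O(1/|x|)$ bound plus a further BV-type estimate giving square-integrability of $x\widehat f(x)$ near infinity; together with membership in a Hardy space this yields $L^1$ of the transform. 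Concretely I expect the argument to run: show $x\widehat f(x)\in L^2$ near infinity using that $df$ has finite total mass and Plancherel; combine with one-sided support to land in $H^1$; apply Hardy's inequality.

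The main obstacle, as I see it, is the bookkeeping around the \emph{modified} Hilbert transform rather than the ordinary one. The correction term $t/(1+t^2)$ is what makes $\widetilde f$ well defined for merely bounded $f$, but it destroys the clean multiplier identity $\widehat{\widetilde f}=-i\,\mathrm{sgn}\cdot\widehat f$; one must compute its Fourier transform (it behaves like $\pi i\,\mathrm{sgn}(x)e^{-|x|}$ up to constants) and check that all the extra pieces it generates are genuinely integrable in $x$ and do not spoil the Hardy-space membership. A secondary technical point is justifying the integration by parts and the manipulation of principal-value/improper Fourier integrals for non-integrable $f$ — this requires approximating $f$ by compactly supported truncations of bounded variation and passing to the limit, controlling everything by $\|f\|_{BV}+\|\widetilde f\|_{BV}$. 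Once these are handled, the low-frequency part $\int_{|x|\le 1}$ is trivial from boundedness of $\widehat f$ and $\widehat{\widetilde f}$, and the theorem follows.
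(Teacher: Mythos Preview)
Your plan has a genuine gap at the point where you invoke Hardy's inequality. Applying it to $g=f+i\widetilde f$ (even granting $g\in H^1$) yields
\[
\int_{\mathbb R}\frac{|\widehat g(x)|}{|x|}\,dx<\infty,
\]
and since $\widehat g(x)=2\widehat f(x)$ on a half-line this only gives $\int |\widehat f(x)|/|x|\,dx<\infty$. For $|x|\ge 1$ the factor $1/|x|$ makes the integrand \emph{smaller}, so this does not control $\int_{|x|\ge1}|\widehat f(x)|\,dx$ as you claim. Separately, your route to $g\in H^1$ is blocked: a BV function vanishing at infinity need not be integrable (think of something decaying like $1/\log(2+|t|)$), and the proposed rescue ``$x\widehat f(x)\in L^2$ by Plancherel'' fails because $df$ being a finite measure gives only that $\widehat{df}$ is bounded, not that it lies in $L^2$.

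The missing idea is to \emph{differentiate first}. The paper works with $f'$ rather than $f$: bounded variation of $f$ gives $f'\in L^1$ automatically, and the key lemma is the commutation relation $\dfrac{d}{dx}\widetilde f=\mathcal{H}f'$ a.e. Bounded variation of $\widetilde f$ then says $\mathcal{H}f'\in L^1$, so $f'\in H^1(\mathbb R)$ with no integrability problem. Hardy's inequality now reads $\int_{\mathbb R}|\widehat{f'}(x)|/|x|\,dx<\infty$, and the identity $\widehat{f'}(x)=ix\widehat f(x)$ (from integration by parts, justified once one checks $f$ is locally absolutely continuous) cancels the $1/|x|$ exactly, delivering $\widehat f\in L^1$. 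The same reasoning with $\mathcal{H}f'$ in place of $f'$ handles $\widehat{\widetilde f}$. In short, the derivative shifts the Hardy inequality by one power of $|x|$ in precisely the direction you need; applying it at the level of $f$ itself shifts it the wrong way.
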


\begin{proof} The only property of a function of bounded variation we really need
is that its derivative exists almost everywhere and is integrable. So, such is $\frac{d}{dx}{\widetilde f}$.

More precisely, "almost everywhere" may be specified as the Lebesgue point. Recall that $x$ is
a Lebesgue point of an integrable function $g$ if $g(x)$ is finite and

\begin{eqnarray}\label{lp}\lim\limits_{t\to0}\frac{1}{t}\int_x^{x+t}|f(u)-f(x)|\,du=0.\end{eqnarray}

\begin{lemma}\label{dc} Under the assumptions of the theorem, we have at almost every $x$

\begin{eqnarray}\label{dc-cd}\frac{d}{dx}{\widetilde f}(x)
={\mathcal{H}}f'(x).\end{eqnarray} \end{lemma}

\begin{proof}[Proof of Lemma \ref{dc}] This lemma is a direct analog of the
various known results for the Hilbert transform of a function from
the spaces different from the space of functions of bounded
variation; see, e.g., \cite[3.3.1, Th.1]{Pand} or \cite[4.8]{King}.
Since the assumptions are different, we use different arguments
while interchanging limits. Let us start with the right-hand side of
(\ref{dc-cd}). Integrating by parts, we obtain

\begin{align}\label{ibp}{\mathcal{H}}f'(x)&=\lim\limits_{\delta\downarrow 0}\biggl(\int_{-\infty}^{x-\delta}
+\int_{x+\delta}^\infty\biggr)\frac{f'(t)}{x-t}\,dt\nonumber\\
&=\lim\limits_{\delta\downarrow0}\biggl[\frac{f(x-\delta)+f(x+\delta)}{\delta}-
\biggl(\int_{-\infty}^{x-\delta}+\int_{x+\delta}^\infty\biggr)\frac{f(t)}{(x-t)^2}\,dt\biggr].\end{align}
In order to integrate by parts, we need $f$ to be locally absolutely
continuous. It turns out that it is just the case under our
assumptions. Consider $F$ to be $f$ on a finite interval, say
$(-\pi,\pi]$ for simplicity, and zero otherwise. Its Hilbert
transform

$$\mathcal{H}F(x)=\frac{1}{\pi}\int_{-\pi}^\pi\frac{f(t)}{x-t}\,dt$$
differs from

$$\widetilde f(x)=\frac{1}{2\pi}\int_{-\pi}^\pi f(t)\cot\frac{x-t}{2}\,dt$$
(also understood in the principal value sense) only in kernels.
However (see, e.g., \cite[(9.0.4)]{BN}), the difference of these
kernels is quite good:

$$\frac12\cot\frac{t}{2}-\frac{1}{t}=\sum\limits_{k\ne0}\frac{t}{2k\pi(t-2k\pi)}.$$
In particular, the derivative of the right-hand side is integrable.
By this, if $\widetilde f$ is of bounded variation, then also
$\mathcal{H}F$ is. Observe, that for a function of compact support
there is no need in modifying the Hilbert kernel, and we can
consider the usual Hilbert transform. When a function and its
conjugate are both of bounded variation, the function is absolutely
continuous, see \cite[Ch.VII, (8.2)]{Zg}, exactly as required.

Further, since $f$ is continuous and vanishes at infinity we find

\begin{align*}&\frac{d}{dx}\biggl(\int_{-\infty}^{x-\delta}+\int_{x+\delta}^\infty\biggr)f(t)
\biggl(\frac{1}{x-t}+\frac{t}{1+t^2}\biggr)\,dt\\=&-\biggl(\int_{-\infty}^{x-\delta}+
\int_{x+\delta}^\infty\biggr)\frac{f(t)}{(x-t)^2}\,dt\\
&+f(x-\delta)\biggl[\frac1\delta+\frac{x-\delta}{1+(x-\delta)^2}\biggr]\\
&-f(x+\delta)\biggl[-\frac1\delta+\frac{x+\delta}{1+(x+\delta)^2}\biggr].\end{align*}
Combining the last two displays, we get

\begin{align*}{\mathcal{H}}f'(x)&=\lim\limits_{\delta\downarrow 0}\biggl\{
\frac{d}{dx}\biggl(\int_{-\infty}^{x-\delta}+\int_{x+\delta}^\infty\biggr)f(t)
\biggl(\frac{1}{x-t}+\frac{t}{1+t^2}\biggr)\,dt\\
&+f(x-\delta)\frac{x-\delta}{1+(x-\delta)^2}-f(x+\delta)
\frac{x+\delta}{1+(x+\delta)^2}\biggr\},\end{align*}
which gives

$${\mathcal{H}}f'(x)=\lim\limits_{\delta\downarrow 0}\,
\frac{d}{dx}\biggl(\int_{-\infty}^{x-\delta}+\int_{x+\delta}^\infty\biggr)f(t)
\biggl(\frac{1}{x-t}+\frac{t}{1+t^2}\biggr)\,dt,$$
since the rest, by continuity, tends to zero as $\delta\downarrow 0.$

What remains is to change the order of the limit and differentiation. The integrals
$\displaystyle{\int_{-\infty}^{x-1}+\int_{x+1}^\infty}$ converge uniformly, and
both are independent of $\delta$. Hence, it suffices to study the convergence of

$$\biggl(\int_{x-1}^{x-\delta}+\int_{x+\delta}^{x+1}\biggr)f(t)\biggl(\frac{1}{x-t}+\frac{t}{1+t^2}\biggr)\,dt.$$
Since

$$\biggl(\int_{x-1}^{x-\delta}+\int_{x+\delta}^{x+1}\biggr)\frac{dt}{x-t}=0,$$
and

$$\biggl(\int_{x-1}^{x-\delta}+\int_{x+\delta}^{x+1}\biggr)\frac{t}{1+t^2}\,dt\le2,$$
it remains to deal with

\begin{eqnarray}\label{last}\biggl(\int_{x-1}^{x-\delta}+\int_{x+\delta}^{x+1}\biggr)\frac{f(t)-f(x)}{x-t}\,dt.\end{eqnarray}
Let $x$ be a Lebesgue point of $f'$. We have

\begin{align*}\int_{x+\delta}^{x+1}\frac{f(t)-f(x)}{x-t}\,dt&=\int_{x+\delta}^{x+1}\frac{1}{x-t}
\int_x^t f'(u)\,du\,dt\\
&=-\int_\delta^1\frac{1}{t}\int_x^{x+t}f'(u)\,du\,dt.\end{align*}
Since $f'(x)$ is finite, we obtain

\begin{align*}\biggl|\int_{x+\delta}^{x+1}\frac{f(t)-f(x)}{x-t}\,dt\biggr|\le
\int_\delta^1\frac{1}{t}\int_x^{x+t}|f'(u)-f'(x)|\,du\,dt+|f'(x)|.\end{align*}
Since the inner integral on the right-hand side is uniformly bounded on $(0,1)$,
we get the uniform convergence of the integrals in (\ref{last}) (the other integral is
treated in exactly the same manner). This allows us
to apply $\lim\limits_{\delta\downarrow 0}$ and $\frac{d}{dx}$ in any order,
which leads to the required relation (\ref{dc-cd}) at any Lebesgue point of $f'$.
Since such is almost every point, the proof is complete.   \hfill\end{proof}

With this result in hand, the proof of the theorem continues as follows. Since the function $f$
is of bounded variation, its derivative $f'$ exists almost everywhere and is integrable.
It follows from the boundedness of variation of its conjugate and from Lemma \ref{dc} that
${\mathcal{H}}f'(x)$ exists at almost every $x$ and is also integrable. Therefore
$f'\in H^1(\mathbb R).$  We shall now make use of the well-known extension of Hardy's inequality
(see, e.g., \cite[(7.24)]{GR})

\begin{eqnarray}\label{Fein}\int_{\mathbb R}\frac{|\widehat{f'}(x)|}{|x|}\,dx
\le\|f'\|_{H^1(\mathbb R)}.\end{eqnarray}
Observe that the assumptions of the theorem imply the cancelation property (\ref{vm}) for $f'$.
Integrating by parts, which is possible since $f$ is locally absolutely continuous, we obtain

$$\widehat{f'}(x)=\int_{\mathbb R}f'(t)e^{-itx}dt=ix\int_{\mathbb R}f(t)e^{-itx}dt.$$
Hence, the left-hand side of (\ref{Fein}) is exactly the $L^1$ norm of the Fourier transform of $f$.
Further, we have $i\mbox{\rm sign}\,x\widehat{f'}(x)=\widehat{{\mathcal{H}}f'}(x)$, which, by
Lemma \ref{dc}, is $\displaystyle{\widehat{\frac{d}{dx}{\widetilde f}}}.$ Integrating by parts
as above, we conclude that in our situation the left-hand side of (\ref{Fein}) is exactly the $L^1$ norm
of the Fourier transform of $\widetilde{f}$. The proof is complete. \hfill\end{proof}

\section{The original Hardy-Littlewood theorem}

In this section we derive the proof of the original Hardy-Littlewood
theorem from Theorem \ref{harlit}. It turned out that the former is
a partial case of the latter when the function is supposed to be of
compact support.

Beginning the proof of Theorem \ref{fcbv}, we may consider $f$ to be
the $2\pi$-periodic extension of the function $F$ which coincides
with $f$ on $(-\pi,\pi]$ and is zero otherwise. Of course, this
function is of bounded variation as well. Using the argument after
(\ref{ibp}) in the opposite direction, we are now under the
assumptions of Theorem \ref{harlit}, and hence the Fourier transform
of $F$ is integrable. It follows from this (see \cite{W2}) that the
Fourier series of its periodic extension, that is, the Fourier
series of $f$, is absolutely convergent. Since the Fourier
coefficients of $\widetilde f$ are the same modulo as those of $f$,
the Fourier series of $\widetilde f$ also converges absolutely.

\section{Radial case}

Let us now make use of the obtained results in problems of integrability
of the multidimensional Fourier transform of a radial function
$f(x)=f_0(|x|).$ Let $\widehat f$ denote its usual Fourier transform
on $\mathbb R^n.$ The known Leray's formula (see Lemma 25.1' in
\cite{SKM}) says that when

\begin{eqnarray}\label{cosa} \int_0^\infty
|f_0(t)|\frac{t^{n-1}}{(1+t)^{\frac{n-1}{2}}}\,dt<\infty,\end{eqnarray}
the following relation holds

\begin{eqnarray}\label{samko}\widehat f(x)=2\pi^{\frac{n-1}{2}}
\int_0^\infty I(t)\cos|x|t\,dt,                 \end{eqnarray}
where the fractional integral $I$ is given by

\begin{eqnarray*} I(t)=\frac{2}{\Gamma\big(\frac{n-1}{2}\big)}\int_t^\infty
sf_0(s)(s^2-t^2)^{\frac{n-3}{2}}ds.                     \end{eqnarray*}
This result has proved to be very convenient for deriving statements
for the Fourier transform of a radial function from known
one-dimensional results; see, e.g., \cite{LT1}, \cite{LT2}.

\begin{theorem}\label{radalbo} Let $f_0$ satisfy (\ref{cosa}), while $I$
and its $n-1$ derivatives be locally absolutely continuous and
vanish at zero and at infinity. If $I^{(n-1)}$ satisfies the
assumptions of Theorem \ref{fcbv}, then the (multidimensional)
Fourier transform of $f$ is Lebesgue integrable over $\mathbb R^n.$
\end{theorem}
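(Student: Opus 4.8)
The plan is to use the Leray formula (\ref{samko}) to convert the multidimensional problem into a one‑dimensional one, and then apply Theorem \ref{fcbv} to the right ingredient. Under the hypothesis (\ref{cosa}) we have the representation
$$\widehat f(x)=2\pi^{\frac{n-1}{2}}\int_0^\infty I(t)\cos|x|t\,dt,$$
so the integrability of $\widehat f$ over $\mathbb R^n$ (with respect to the measure $c_n\rho^{n-1}\,d\rho$, $\rho=|x|$) reduces to understanding the size of the one‑dimensional cosine transform
$$g(\rho):=\int_0^\infty I(t)\cos\rho t\,dt$$
for large $\rho$, since near the origin $\widehat f$ is bounded (it is continuous, $f$ being in $L^1(\mathbb R^n)$ by (\ref{cosa})). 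The factor $\rho^{n-1}$ coming from polar coordinates is exactly compensated by integrating by parts $n-1$ times: since $I$ and its first $n-1$ derivatives vanish at $0$ and at $\infty$ and are locally absolutely continuous, each integration by parts in $\int_0^\infty I(t)\cos\rho t\,dt$ produces no boundary terms and one factor of $\rho^{-1}$, giving
$$\rho^{n-1}g(\rho)=\pm\int_0^\infty I^{(n-1)}(t)\,\kappa_n(\rho t)\,dt,$$
where $\kappa_n$ is $\cos$ or $\sin$ according to the parity of $n-1$.

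The remaining step is to recognize the last integral as (a constant times) a Fourier transform of the even or odd extension of $I^{(n-1)}$ to $\mathbb R$, and to invoke Theorem \ref{fcbv}. Extend $I^{(n-1)}$ to an even function $\Phi$ on $\mathbb R$ if $n-1$ is even (so that $\kappa_n=\cos$), or to an odd function if $n-1$ is odd (so that $\kappa_n=\sin$); because $I^{(n-1)}$ vanishes at $0$, the odd extension is continuous there, and in either case $\Phi$ is of bounded variation on $\mathbb R$ (it is by hypothesis of bounded variation on $(0,\infty)$ and satisfies the assumptions of Theorem \ref{fcbv}, hence in particular is bounded and absolutely continuous, so the reflected copy matches up). Then $\int_0^\infty I^{(n-1)}(t)\kappa_n(\rho t)\,dt$ equals $\tfrac12\widehat\Phi(\rho)$ up to a harmless constant/sign. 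Now $\Phi$ and its conjugate $\widetilde\Phi$ are both of bounded variation — for $\widetilde\Phi$ this is because the Hilbert transform commutes with the parity operation (it sends even to odd and odd to even) and is, on the periodic side after the compact‑support reduction of Section 3, exactly the operation appearing in Theorem \ref{fcbv}; since $I^{(n-1)}$ satisfies the assumptions of that theorem, so does $\Phi$. Hence by Theorem \ref{fcbv} (via its real‑line reformulation established in Section 2, or directly via Theorem \ref{harlit} applied to the compactly supported or rapidly decaying $\Phi$), $\widehat\Phi\in L^1(\mathbb R)$.

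Putting the pieces together: $\rho^{n-1}|g(\rho)|$ is bounded by a constant times $|\widehat\Phi(\rho)|\in L^1(0,\infty)$, and $\widehat f(x)$ is bounded near the origin, so
$$\int_{\mathbb R^n}|\widehat f(x)|\,dx=c_n\int_0^\infty |\widehat f|\,\rho^{n-1}\,d\rho
\le c_n\Big(\int_0^1 \|\widehat f\|_\infty\rho^{n-1}\,d\rho+C\int_1^\infty|\widehat\Phi(\rho)|\,d\rho\Big)<\infty,$$
which is the assertion.

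The main obstacle I expect is the careful justification of the repeated integration by parts and of the passage $\rho^{n-1}g(\rho)=\text{const}\cdot\widehat\Phi(\rho)$: one must check that the boundary terms genuinely vanish (this is where "vanish at zero and at infinity" for $I,\dots,I^{(n-1)}$ is used, together with the local absolute continuity needed to integrate by parts), and that the decay afforded by (\ref{cosa}) together with these vanishing conditions is enough to make all the intermediate integrals absolutely convergent so that Fubini/differentiation‑under‑the‑integral manipulations are legitimate. The parity bookkeeping (even vs. odd extension depending on $n\bmod 2$) is routine but must be stated, since the statement of Theorem \ref{fcbv} is phrased for the cosine/Fourier setting and one needs the correct reflected function to land exactly in its hypotheses.
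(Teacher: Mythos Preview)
Your proposal is correct and follows essentially the same route as the paper: integrate by parts $n-1$ times in Leray's formula (the boundary terms vanish by hypothesis), apply the one-dimensional Hardy--Littlewood result to the remaining integral, and then pass to polar coordinates. The paper compresses your even/odd parity bookkeeping into the single formula $\cos\bigl(\tfrac{\pi(n-1)}{2}-|x|t\bigr)$ and does not split off the region near the origin (this is unnecessary, since after the integration by parts $\rho^{n-1}|g(\rho)|$ equals a constant times $|\widehat\Phi(\rho)|$, which is integrable all the way down to $0$).
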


\begin{proof} First, we integrate by parts $n-1$ times in
(\ref{samko}). Integrated terms vanish. We thus arrive to the
formula

$$\widehat f(x)=\frac{2\pi^{(n-1)/2}(-1)^{n-1}}{|x|^{n-1}}
\int_0^\infty I^{(n-1)}(t)\cos(\frac{\pi(n-1)}{2}-|x|t)\,dt.  $$
Applying now Theorem \ref{fcbv} to the integral on the right-hand
side and integrating in the polar coordinates, we complete the
proof. \hfill\end{proof}

\begin{rem}
Observe that for $n=1$ understanding $I$ formally as $f_0$ reduces
Theorem \ref{radalbo} to the one-dimensional Theorem \ref{harlit}.
\end{rem}

\bibliographystyle{amsplain}

\end{document}